\documentclass[11pt]{amsart}
\usepackage{amssymb, amstext, amscd, amsmath,color}

% %%%%%%%%%%%%%%   begin macros   %%%%%%%%%%%%%%

%      Cites in bold rather than roman.
\makeatletter
\def\@cite#1#2{{\m@th\upshape\bfseries%
[{#1\if@tempswa{\m@th\upshape\mdseries, #2}\fi}]}}
\makeatother

% rename section symbol

%     normal style theorems,
%     numbered within the section
\theoremstyle{plain}
\newtheorem{thm}{Theorem}[section]
\newtheorem{cor}[thm]{Corollary}
\newtheorem{prop}[thm]{Proposition}
\newtheorem{lem}[thm]{Lemma}

%     Theorem style with roman text
%     numbered within section
\theoremstyle{definition}
\newtheorem{rem}[thm]{Remark}

\newtheorem{defn}[thm]{Definition}

\newtheorem{eg}[thm]{Example}

%      Proof environment

%      Blackboard bold letters

\newcommand{\bB}{{\mathbb{B}}}
\newcommand{\bC}{{\mathbb{C}}}

\newcommand{\bF}{{\mathbb{F}}}

%      Capital script letters

  \newcommand{\C}{{\mathcal{C}}}

  \newcommand{\I}{{\mathcal{I}}}
  
  \newcommand{\K}{{\mathcal{K}}}
\renewcommand{\L}{{\mathcal{L}}}
  \newcommand{\M}{{\mathcal{M}}}

  \newcommand{\U}{{\mathcal{U}}}

%Greek Letters

\renewcommand{\phi}{\varphi}
\newcommand{\upchi}{{\raise.35ex\hbox{\ensuremath{\chi}}}}

%Fraktur letters
\newcommand{\fA}{{\mathfrak{A}}}

\newcommand{\fJ}{{\mathfrak{J}}}

\newcommand{\fL}{{\mathfrak{L}}}

\newcommand{\fR}{{\mathfrak{R}}}

%Roman letters for math

%      Text used in equations

%Operators

\newcommand{\dist}{\operatorname{dist}}

\newcommand{\Lat}{\operatorname{Lat}}
\newcommand{\CycLat}{\operatorname{CycLat}}
\newcommand{\ran}{\operatorname{Ran}}

\newcommand{\spn}{\operatorname{span}}

%      Useful shortforms

\newcommand{\ip}[1]{\langle #1 \rangle}

\newcommand{\norm}[1]{\left\| #1 \right\|}

\newcommand{\ol}{\overline}

\newcommand{\wot}{\textsc{wot}}

\newcommand{\Hinf}{H^\infty }

\newcommand{\inp}[2]{\left \langle #1, #2 \right\rangle}
%%%%%%%%%%%%%%   end of macros   %%%%%%%%%%%%%%%
\begin{document}

%%%%%%%%%%%%%%%%%%%%%%%%%%%%%%%%%%%%%%
\title[A Toeplitz corona theorem for complete NP spaces]%
{The Toeplitz corona problem for algebras of multipliers on a Nevanlinna-Pick space }

\author[R. Hamilton]{Ryan Hamilton}
\address{Pure Math.\ Dept.\\U. Waterloo\\Waterloo, ON\;
N2L--3G1\\CANADA}
\email{rhamilto@math.uwaterloo.ca}

\author[M. Raghupathi]{Mrinal Raghupathi}
\address{1326 Stevenson Center, Department of Mathematics, Vanderbilt University, Nashville, TN, 37240}
\email{mrinal.raghupathi@vanderbilt.edu}

\begin{abstract}
  Suppose $\fA$ is an algebra of operators on a Hilbert space $H$ and
  $A_1, \dots, A_n \in \fA$.  If the row operator $[A_1, \dots, A_n]
  \in B(H^{(n)},H)$ has a right inverse in $B(H, H^{(n)})$, the
  Toeplitz corona problem for $\fA$ asks if a right inverse can be
  found with entries in $\fA$.  When $H$ is a complete Nevanlinna-Pick
  space and $\fA$ is a weakly-closed algebra of multiplication
  operators on $H$, we show that under a stronger hypothesis, the
  corona problem for $\fA$ has a solution.  When $\fA$ is the full
  multiplier algebra of $H$, the Toeplitz corona theorems of Arveson,
  Schubert and Ball-Trent-Vinnikov are obtained.  A tangential
  interpolation result for these algebras is developed in order to
  solve the Toeplitz corona problem.

\end{abstract}

\subjclass[2000]{Primary 47A57; Secondary 30E05, 46E22}
\keywords{NP interpolation, reproducing kernel, corona theorem}
\thanks{First author partially supported by an NSERC graduate scholarship.}
\thanks{}

\date{}
\maketitle

%%%%%%%%%%%%%%%%%%%%%%%%%%%%%%%%%%%%%%%%%%%
\section{Introduction}\label{S:Introduction}
This paper focuses on the extension of some recently obtained results
in tangential interpolation to a large class of algebras of operators acting on reproducing kernel Hilbert spaces.  
In particular, these results apply to arbitrary weakly-closed algebras of multipliers on
Drury-Arveson space and all other complete Nevanlinna-Pick (NP) spaces.
A consequence of these results is an operator corona
theorem (often called a Toeplitz corona theorem) for these algebras. This work was
motivated by results of the second author and Wick~\cite{RagWick}
which contains a NP theorem ``up to a constant''. The
application of this result to the study of interpolating sequences is
the subject of work currently in progress.

Considerable attention has been given to finding analogues of the
NP theorem, Carleson's corona theorem and Carleson's
interpolation theorem to several variables. In one
variable, the disk is a natural domain for function theory. In higher
dimensions there are considerable differences between the polydisk and
the ball.

In 1978, Drury~\cite{Drury} provided a generalization of von Neumann's
inequality to several variables. In order to do this, Drury introduced
a space of analytic functions on the unit ball $\bB_d$, now called
the Drury-Arveson space, and its associated multiplier
algebra. Popescu~\cite{Popescu} and Arveson~\cite{Arv} carried out a
detailed analysis of the structure of this space. In the 1990s the work of 
McCullough~\cite{McCull92, McCull94}, Quiggin~\cite{Quig93, Quig94}, and Agler-McCarthy~\cite{AMc00} showed that the Drury-Arveson space is the prototype for complete NP spaces.

Our approach is based on reformulating the interpolation problem as a
distance problem and computing a distance formula. The formula is
obtained through the use of Banach space duality. This approach was
pioneered by Sarason~\cite{Sar67} and has been used to tackle
interpolation problems in many contexts~\cite{DP98b,McCull96}. The
first appearance of a Toeplitz corona theorem is in the work of
Arveson~\cite{Arveson}. This work also contained
the distance formula for nest algebras and introduced
the concept of hyper-reflexivity.
 Our main interpolation result also makes use of a distance
formula similar in spirit to the original work of
Arveson. Following this theme, a Toeplitz corona theorem is deduced from 
this formula.

There are other approaches to the tangential interpolation problem and
Toeplitz corona problem. Schubert~\cite{Schubert} approached the
problem through the commutant lifting theorem of Sz.-Nagy and Foias
and also obtained the best bounds for the solution. Commutant lifting
also appears in the (unpublished) work of Helton in this area.
McCullough~\cite{McCull96} used duality techniques to prove a NP
interpolation theorem for dual algebras. McCullough then establishes a
Pick type theorem for uniform algebras that are approximating in
modulus. His work also reproves and unifies several well-known results
in interpolation. Our approach owes a great deal to his insights.

The results in this paper build on the approach to interpolation in
dual algebras made by Davidson and Hamilton~\cite{DH}  and by Raghupathi and Wick~\cite{RagWick} for tangential
interpolation problems in subalgebras of $H^\infty$. 
In keeping with the approach taken in these papers we begin by proving
an abstract theorem about families of kernels and their relation
to a certain distance formula. This is the content of Section~\ref{S:tnp}. 
In Section~\ref{S:predual}, we show that if an algebra of multipliers
has a certain factorization property of weak$^*$-continuous functionals 
called $A_1(1)$, then an exact distance formula is achieved.
Following this, we show in Section ~\ref{S:ds}  that an arbitrary weak$^*$-closed alegbra of multipliers on a complete NP space has this property $A_1(1)$. Consequently, the required distance formula, which implies the tangential interpolation result, is obtained.
Finally, the Toeplitz corona theorem is proved for these algebras in Section~\ref{S:TC}.

\section{Tangential NP families}%
\label{S:tnp}
Suppose $X$ is a set, and $H$ is a Hilbert space of complex-valued functions on $X$.  We call $H$ a reproducing kernel Hilbert space if point evaluations are continuous.
In this case, there is a unique element
of the space $H$ such that $f(x) = \inp{f}{k_x}$ for all $f$ in $H$. The
positive semi-definite function $K(x,y) = \inp{k_y}{k_x}$ is called
the reproducing kernel of $H$. The kernel function uniquely determines
$H$ and there is a well-known correspondence between reproducing
kernel Hilbert spaces and positive semi-definite functions.

For every reproducing kernel Hilbert space $H$, we can associate to it the algebra of multipliers $M(H)$. The algebra $M(H)$ is the set of complex-valued functions on $X$ that pointwise multiply $H$ into itself. By the closed-graph theorem, each multiplier $f\in M(H)$ induces the bounded multiplication operator $M_f\in B(H)$. The key property of a multiplier is that $M_f^*k_x = \overline{f(x)}k_x$ for all $x\in X$ and this property characterizes multipliers.

Every closed subspace $L$ of $H$ is a also a reproducing kernel
Hilbert space and the kernel function of $L$ is given by
$k^L(x_j,x_i) = \langle P_Lk_i, k_j \rangle$.  It could be
the case that for some $x$ in $X$, we have $\langle f, k_x
\rangle = f(x) = 0$ for every $f \in L$, and we allow this
possibility.  We will call a unital weak-$*$-closed subalgebra $\fA$ of
$M(H)$ a \emph{dual algebra of multipliers} and denote its lattice of
invariant subspaces by $\Lat (\fA)$. For $L \in \Lat(\fA)$, every
function $f\in \fA$ defines the multiplication operator $M_f^L$ on
$L$.  Evidently, $M_f^L = M_f|_L$.

The collection of kernels $\{k^L : L \in \Lat (\fA)\}$ is
\emph{realizable} in the sense that
\[\fA = \bigcap_{L \in \Lat (\fA)} M(L). \]
The cyclic invariant subspace for $\fA$ generated by a function $h\in
H$ will be denoted $\fA [h]$ and the set of all cyclic invariant
subspaces for $\fA$ will be denoted $\CycLat (\fA)$.

The column space over $\fA$ will be denoted $C(\fA)$. This is the set
of all operators $T\in B(H, H\otimes \ell^2)$ with entries from
$\fA$. There is a natural identification between $C(\fA)$ and the set
of multipliers between $H$ and $H\otimes \ell^2$.  For $F = [f_1, f_2,
\dots]^T \in C(\fA)$, the associated multiplication operator is given
by $M_F = [M_{f_1},M_{f_2} \dots]^T$.  For $L \in \Lat(A)$, $M_F^L =
[M_{f_1}^L, M_{f_2}^L, \dots]^T$ will denote the multiplier from $L$
into $L \otimes \ell^2$.  The row space $R(\fA)$ is defined
analogously.

The tangential interpolation problem specifies $n$ points
$x_1,\ldots,x_n\in X$, $w_1,\ldots,w_n\in\bC$ and vectors
$v_1,\ldots,v_n\in \ell^2$. The objective is to minimize
$\norm{F}_{C(\fA)} = \|M_F\|_{B(H,H^{(n)})}$ over all functions $F$
such that $F(x_i)^*v_i = \overline{w_i}$.  Given the finite set
$\{x_1, \dots, x_n\} \subset X$, we denote $\fJ = \fJ_E = \{ G \in
C(\fA) : G(x_i)^*v_i = 0, \: i=1, \dots, n \}$. The column space
$C(\fA)$ is naturally a weak$^*$-closed $\fA$-bimodule and $\fJ$ is
weak$^*$-closed submodule of $C(\fA)$.

An application of standard duality arguments shows us that this
optimization problem is equivalent to computing the distance of $F$
from the submodule $\fJ$. One is usually interested in relating this
distance to the norm of the compression of the operator $M_f$ to a
semi-invariant subspace. In many cases a family of semi-invariant
subspaces is required.  We first establish an easy distance estimate
that always holds.

\begin{lem} \label{L:easyestimate} Suppose $H$ is a RKHS on a set $X$
  and that $\fA$ is a dual algebra of multipliers on $H$. If
  $x_1,\ldots,x_n \in X$ and $v_1,\ldots,v_n\in \ell^2$, then the
  distance from $F \in C(\fA)$ to $\fJ = \{ G \in C(\fA) : G(x_i)^*v_i
  = 0, \: i=1, \dots, n \}$ has the lower bound
\[ d(F, \fJ) \geq \sup_{L \in \Lat (\fA)}\|P_L M_F^*|_{\ol{C(\fA)L} \ominus \fJ L} \|.\] 
\end{lem}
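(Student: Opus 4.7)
The plan is to use a straightforward compression/duality argument. For any $F \in C(\fA)$ and $G \in \fJ$, the operator norm $\|M_F - M_G\|$ dominates the norm of its compression to any semi-invariant subspace, and with a careful choice of subspace the $G$-contribution vanishes, leaving a quantity depending only on $F$.

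First I would fix $L \in \Lat(\fA)$ and $G \in \fJ$. Since every entry of $F - G$ lies in $\fA$ and $L$ is $\fA$-invariant, $M_F - M_G$ carries $L$ into $L \otimes \ell^2$, so its compression $(M_F - M_G)|_L : L \to L \otimes \ell^2$ has adjoint $P_L(M_F^* - M_G^*)|_{L \otimes \ell^2}$, and this has norm at most $\|M_F - M_G\|$. For the same reason $\overline{C(\fA) L} \subseteq L \otimes \ell^2$, so the subspace $S := \overline{C(\fA) L} \ominus \fJ L$ sits inside $L \otimes \ell^2$; restricting further from $L \otimes \ell^2$ to $S$ can only decrease the norm.

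The heart of the argument is the observation that $P_L M_K^*|_S = 0$ for every $K \in \fJ$. This is essentially forced by the definition of $S$: any $y \in S$ is orthogonal to $\fJ L$, so for every $K \in \fJ$ and every $h \in L$ one has $\langle M_K^* y, h \rangle = \langle y, M_K h \rangle = 0$, which says $M_K^* y \perp L$, i.e.\ $P_L M_K^* y = 0$. Applying this with $K = G$ and chaining the inequalities gives
\[ \|M_F - M_G\| \;\geq\; \|P_L(M_F^* - M_G^*)|_S\| \;=\; \|P_L M_F^*|_S\|. \]
Taking the infimum over $G \in \fJ$ yields $d(F,\fJ) \geq \|P_L M_F^*|_S\|$, and the supremum over $L \in \Lat(\fA)$ produces the stated bound.

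I do not anticipate a serious obstacle here: the content is a textbook duality/compression computation and uses only invariance of $L$ under $\fA$. The one point requiring care is the set-up of $S$ — it must be large enough for the restriction to retain meaningful information about $F$, yet small enough that the orthogonality defining it automatically annihilates every $M_G^*$ with $G \in \fJ$. Once $S$ is set up inside $L \otimes \ell^2$ the rest is a matter of taking adjoints and suprema.
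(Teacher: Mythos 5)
Your argument is correct and is essentially the same as the paper's: both compress $M_F - M_G$ to the semi-invariant pair $(L, \ol{C(\fA)L}\ominus \fJ L)$ and observe that the $G$-contribution is annihilated by this compression. The paper simply writes the vanishing as $P_S M_G P_L = 0$ rather than passing to adjoints as you do, but these are the same calculation.
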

\begin{proof}
For each $L \in \Lat(\fA)$, $F \in
C(\fA)$ and $G \in \fJ$ we have 
\begin{align*}
\| F - G \|_{C(\fA)} & \geq  \|P_{\ol{C(\fA)L} \ominus \fJ L} (M_F- M_G) P_L \|  \\
& =  \|P_{\ol{C(\fA)L} \ominus \fJ L} M_F P_L \|   \\
& =  \|P_L M_F^*|_{\ol{C(\fA)L} \ominus \fJ L} \| \qedhere.
\end{align*}
\endproof
\end{proof}

Given a subspace $L\in\Lat(\fA)$ we let $K_L = L\otimes \ell^2\ominus \ol{\fJ L}\subseteq L\otimes \ell^2$. Under certain assumptions on separation of points it is the case that $K_L$ is the span of the functions $\{k^L_{x_1}\otimes v_1, \ldots, k^L_{x_n}\otimes v_n\}$.  We denote this span $M_L$.  A simple computation shows that the operator $M_F^*|_{M_L}$ is a contraction if and only if the $n\times n$ matrix
\[ [(\inp{v_j}{v_i}-w_i\overline{w_j})K^L(x_i,x_j)] \]
is positive semidefinite. 

\begin{defn} \label{D:tangfam}
We will call a family of subspaces $\fL\subset \Lat(\fA)$ a \textit{tangential NP family} if for every choice of points $x_1,\ldots,x_n\in X$, $w_1,\ldots,w_n\in \bC$ and $v_1,\ldots,v_n\in \ell^2$, we have $d(F,\fJ) = \sup_{L\in\fL}\norm{P_LM_F^*|_{M_L}}$. 
\end{defn}
Given an algebra $\fA$ it is not necessarily true that $\Lat (\fA)$ is a tangential NP family (see Section 6 of \cite{DH}).

Suppose that $M\subseteq B(H)$ is a weak$^*$-closed subspace. We say
that $M$ has property $A_1(1)$ if given a weak$^*$-continuous linear
functional $\phi$ on $M$ with $\norm{\phi}<1$, there exist vectors
$x,y\in H$, with $\norm{x}\norm{y}<1$, such that $\phi(A) =
\inp{Ax}{y}$ for all $A\in M$ (see ~\cite{BFP85} for a detailed
treatment of predual factorization properties).  It is easy to check,
from the duality between trace-class operators and $B(H)$, that the
space $M$ viewed as $M\otimes I\subseteq B(H\otimes \ell^2)$ has
$A_1(1)$. This observation is essentially the starting point for the
distance formulae in Arveson~\cite{Arveson},
McCullough~\cite{McCull96}, and Raghupathi-Wick~\cite{RagWick}.  If
$H$ and $K$ are distinct Hilbert spaces, the weak$^*$-topology on
$B(H,K)$ is the one inherited from identifying $B(H,K)$ with the
natural subspace of $B(H \oplus K)$.

We will show that if the column space $C(\fA)$ has property $A_1(1)$,
then $\CycLat(\fA)$ is a tangential NP-family. In view of the comments
in the previous paragraph, this gives rise to a family of matrix
positivity conditions that are equivalent to the solvability of the
tangential interpolation problem.  We now state the required distance
formula; a proof of this will be given in the next section.

\begin{thm}\label{T:main}
Suppose $\fA$ is a weak-$*$-closed algebra of multipliers on a reproducing kernel Hilbert space $H$ and that the column space $\mathcal{C} (\fA)$, regarded as a weak-$*$-closed subspace of $B(H , H\otimes \ell^2)$, has property $A_1(1)$. Then the distance to the subspace $\fJ$ is given by
\begin{align*}
\dist (F, \fJ) = \sup_{L \in \CycLat(\fA)} \| P_L M_F^* |_{\K_L} \| .
\end{align*}
Suppose further that a functional $\phi$ in the predual $\C ( \fA)_{*}$ can be factored as $\phi (F) = \langle F h, K \rangle$ where $h$ does not vanish on at any  of the points $x_i$.  Then the distance formula above can be improved to
\begin{align*}
\dist (F, \fJ) = \sup \{\| P_L M_F^* |_{\M_L} \| : L = \fA[h] \in \CycLat {\fA}, \: h(x_i) \neq 0\}.
\end{align*}
Equivalently, given points $x_1,\ldots,x_n\in X$, $w_1,\ldots,w_n\in \bC$ and $v_1,\ldots,v_n\in\ell^2$, there exists a function $F\in C(\fA)$ such that $\norm{M_F}\leq 1$ and $F(x_i)^*v_i = \overline{w_i}$ if and only if 
$[(\inp{v_j}{v_i} - w_i\overline{w_j})K^L(x_i,x_j)]\geq 0$ for all $L = \fA[h]$, $h(x_i)\not=0$. 
\end{thm}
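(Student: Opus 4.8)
The plan is to run the Sarason--Arveson duality argument, using property $A_1(1)$ of $\C(\fA)$ to represent the extremal functional concretely as a vector functional. First I would reduce to a distance problem: if $F_0\in\C(\fA)$ is any fixed function with $F_0(x_i)^*v_i=\ol{w_i}$, then $\inf\{\norm{M_F}:F(x_i)^*v_i=\ol{w_i}\}=\dist(F_0,\fJ)$, and since $\fJ$ is weak-$*$-closed, Banach space duality identifies
\[ \dist(F,\fJ)=\sup\{\,|\phi(F)|:\phi\in\fJ_\perp,\ \norm{\phi}<1\,\},\qquad \fJ_\perp=\{\phi\in\C(\fA)_*:\phi|_{\fJ}=0\}. \]
For the lower bound $\dist(F,\fJ)\ge\sup_{L\in\CycLat(\fA)}\norm{P_LM_F^*|_{\K_L}}$ I would apply Lemma~\ref{L:easyestimate}, restricted to cyclic subspaces $L=\fA[h]$: since the vectors $M_fh$ ($f\in\fA$) are dense in $L$, one has $\ol{\C(\fA)L}=\ol{\C(\fA)h}=L\otimes\ell^2$, so the space $\ol{\C(\fA)L}\ominus\ol{\fJ L}$ appearing there is exactly $\K_L=L\otimes\ell^2\ominus\ol{\fJ L}$.

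For the matching upper bound, take $\phi\in\fJ_\perp$ with $\norm{\phi}<1$ and invoke $A_1(1)$ to write $\phi(T)=\inp{Th}{Y}$ with $\norm{h}\,\norm{Y}<1$; set $L=\fA[h]\in\CycLat(\fA)$. Because $Th\in L\otimes\ell^2$ for every $T\in\C(\fA)$, I may replace $Y$ by $P_{L\otimes\ell^2}Y$ without altering $\phi$ or increasing $\norm{Y}$. Since $\fJ$ is a submodule of $\C(\fA)$ one has $\ol{\fJ L}=\ol{\fJ h}$, so $\phi\in\fJ_\perp$ forces $Y\perp\ol{\fJ h}$ and hence $Y\in\K_L$. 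Therefore
\[ |\phi(F)|=|\inp{M_Fh}{Y}|=|\inp{h}{P_LM_F^*Y}|\le\norm{h}\,\norm{P_LM_F^*|_{\K_L}}\,\norm{Y}<\norm{P_LM_F^*|_{\K_L}}, \]
and taking the supremum over all such $\phi$ gives $\dist(F,\fJ)\le\sup_{L\in\CycLat(\fA)}\norm{P_LM_F^*|_{\K_L}}$, which establishes the first formula.

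For the sharper formula I would first record the elementary fact that $\M_L\subseteq\K_L$ always, with equality when $h(x_i)\ne 0$ for all $i$: using that each $M_G^*(k_{x_i}\otimes v_i)$ is a scalar multiple of $k_{x_i}$, any linear functional $G\mapsto\inp{M_Gh}{Y}$ vanishing on $\fJ$ lies in the span of the $n$ node functionals, and non-vanishing of $h$ at the $x_i$ lets one solve for coefficients exhibiting $Y$ as a combination of the $k^L_{x_i}\otimes v_i$. Granting the extra hypothesis that some $\phi_0\in\C(\fA)_*$ factors as $\phi_0(F)=\inp{Fh_0}{K_0}$ with $h_0(x_i)\ne0$ for all $i$, I would upgrade the $A_1(1)$-representation of a near-extremal $\phi\in\fJ_\perp$ so that its generating function $h$ also avoids the nodes, by perturbing $h\rightsquigarrow h+th_0$ for small $t>0$ and reprojecting $Y$ onto $\K_{\fA[h+th_0]}$; the estimate of the previous paragraph, now with $\K_L=\M_L$, then yields $\dist(F,\fJ)\le\sup\{\norm{P_LM_F^*|_{\M_L}}:L=\fA[h],\ h(x_i)\ne0\}$, the reverse inequality being immediate from $\M_L\subseteq\K_L$ and the first formula. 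I expect this perturbation step to be the main obstacle: one must check that the reprojected functional remains in $\fJ_\perp$, keeps norm $<1$, and still nearly attains $\dist(F,\fJ)$, i.e.\ that the finite-dimensional data attached to $\fA[h+th_0]$ behaves well as $t\to0$.

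Finally, the ``equivalently'' clause follows from two standard facts. The unit ball of $\C(\fA)$ is weak-$*$-compact and $F_0+\fJ$ is weak-$*$-closed, so the infimum defining $\dist(F_0,\fJ)$ is attained; hence $\dist(F_0,\fJ)\le1$ is equivalent to the existence of $F$ with $\norm{M_F}\le1$ and $F(x_i)^*v_i=\ol{w_i}$. And the computation recorded before Definition~\ref{D:tangfam} identifies $\norm{P_LM_F^*|_{\M_L}}\le1$ with positivity of the matrix $[(\inp{v_j}{v_i}-w_i\ol{w_j})K^L(x_i,x_j)]$. Combining these with the refined distance formula gives the asserted matrix-positivity criterion.
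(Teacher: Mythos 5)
Your argument for the first distance formula matches the paper's: pass to a near-extremal $\phi\in\fJ_\perp$, factor via $A_1(1)$ as $\phi(T)=\inp{Th}{Y}$ with $\norm{h}\norm{Y}<1$, set $L=\fA[h]$, replace $Y$ by its projection onto $L\otimes\ell^2$, and note $Y\in\K_L$ since $\phi$ annihilates $\fJ$ and $\ol{\fJ L}=\ol{\fJ h}$. Your observation that $\ol{C(\fA)L}=L\otimes\ell^2$ when $L$ is cyclic, so that the space $\ol{C(\fA)L}\ominus\ol{\fJ L}$ from Lemma~\ref{L:easyestimate} coincides with $\K_L$, is a correct point the paper leaves implicit, and it is needed for the lower bound to read as claimed.

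The gap is in the second distance formula. You read the extra hypothesis as supplying only one auxiliary $\phi_0$ factoring through a nonvanishing $h_0$, and try to transfer this to a near-extremal $\phi$ by replacing its generator $h$ with $h+th_0$ and ``reprojecting $Y$.'' This does not preserve $\phi$: one has $\inp{T(h+th_0)}{Y}=\phi(T)+t\inp{Th_0}{Y}$, and projecting $Y$ onto $\K_{\fA[h+th_0]}$ has no reason to restore $\phi$, since there is no intertwining between the vector functionals built from $h$ and from $h+th_0$. In particular the modified functional need not lie in $\fJ_\perp$ nor nearly attain $\dist(F,\fJ)$. You rightly flagged this as the weak point; as sketched it does not close, and I do not see how to repair it from the ``some $\phi_0$'' reading alone. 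The reading actually used (and delivered by Corollaries~\ref{C:completePick} and~\ref{C:completePick2}) is that the near-extremal $\phi$ \emph{itself} admits an $A_1(1)$-factorization $\phi(T)=\inp{Th}{K}$ with $h$ cyclic for $M(H)$, hence nonvanishing at every $x_i$. With that, no perturbation is needed: the cyclic subspace $L=\fA[h]$ produced in your first-part estimate already lies in the restricted family $\{\fA[h]:h(x_i)\ne 0\}$, and Lemma~\ref{L:kernspan} gives $\K_L=\M_L$.

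One more small point: your ``equivalently'' paragraph presumes the existence of some $F_0\in\C(\fA)$ with $F_0(x_i)^*v_i=\ol{w_i}$. Under the positivity hypothesis this is exactly Proposition~\ref{P:arbitraryinterpolant} and should be cited; without it, the infimum you want to show is attained would be taken over the empty set.
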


In Davidson-Hamilton \cite{DH}, it was shown that if a dual algebra of multipliers has property $A_1(1)$, then one may deduce a NP type theorem.  This is precisely the case of Theorem~\ref{T:main} when considering columns of length one.  Algebras of multipliers appear to be a natural setting for predual factorization results; a trend pioneered by Sarason \cite{Sar67}.

\section{Predual factorization and a distance formula}%
\label{S:predual}
It is entirely possible when dealing with subalgebras that there may
be no functions that satisfy the condition $F(x_i)^*v_i =
\overline{w_i}$. Therefore, the final claim in Theorem~\ref{T:main}
depends on the existence of at least one function $F\in C(\fA)$ such
that $F(x_i)^*v_i = \overline{w_i}$ (here there is no norm constraint
on $F$). The proof of the existence of such a function depends in a
crucial way on the fact that there is a function $h \in H$ that does
not vanish at the points $x_1,\ldots, x_n$.

The idea is fairly straightforward, although the computations are a
little involved. The algebra $\fA$ induces an equivalence relation on
the set $\{x_1,\ldots,x_n\}$ as follows: $x_i\sim x_j$ if and only if
$f(x_i) = f(x_j)$ for every $f \in \fA$. If $h$ is any function in $H$
that does not vanish on the $x_i$, it is not difficult to establish
that $x_i \sim x_j$ if and only if $k^L_{x_i}$ and $k_{x_j}^L$ are
linearly dependent, where $L = \fA[h]$. Let $X_1,\ldots,X_p$ denote
the equivalences classes. The equivalence relation leads naturally to
a partition of unity $\{e_k\}\subseteq \fA$ such that $\sum_{k=1}^p
e_k = 1$ and $e_k|_{X_k} = 1$. For example, to construct $e_1$, we
first choose $f_2,\ldots,f_p$ such that $f_i(x) = 1$ for $x\in X_1$
and $f_i(y) = 0$ for $y \in X_i$. Now let $e_1 = f_2\cdots f_p$. We
now follow the argument contained in the results of~\cite{RagWick}
which provides a construction of the function $F$ in terms of this
partition of unity. We state the result here for completeness.

\begin{prop}\label{P:arbitraryinterpolant}
  Suppose $\fA$ is a dual algebra of multipliers on $H$ and that
  $[(\inp{v_j}{v_i}-w_i\overline{w_j})K^L(x_i,x_j)]\geq 0$ for at
  least one subspace $L$ of the form $L = \fA[h]$, where
  $h(x_i)\not=0$. Then, there is a function $F \in C(\fA)$ such that
  $F(x_i)^*v_i = w_i$ for each $i$.
\end{prop}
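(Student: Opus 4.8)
The plan is to reduce the tangential problem to a finite-dimensional linear-algebra fact and then patch the local solutions together using the partition of unity $\{e_k\}$ sketched before the statement. First I would fix $h\in H$ with $h(x_i)\neq 0$ and set $L=\fA[h]$. The hypothesis says the Pick matrix $[(\inp{v_j}{v_i}-w_i\overline{w_j})K^L(x_i,x_j)]$ is positive semidefinite, which (by the computation recorded just before Definition~\ref{D:tangfam}) is exactly the statement that the map sending $k^L_{x_i}\otimes v_i\mapsto \overline{w_i}\,k^L_{x_i}$ extends to a well-defined contraction $T$ on $\M_L=\spn\{k^L_{x_i}\otimes v_i\}$ into $L$. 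In particular it is \emph{well-defined}: whenever $\sum c_i\, k^L_{x_i}\otimes v_i=0$ we get $\sum c_i\overline{w_i}\,k^L_{x_i}=0$. This well-definedness is the only consequence of the Pick condition that I need; I will not need the norm bound, since there is no norm constraint on $F$.

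Next I would work one equivalence class $X_k$ at a time. Within $X_k$ all the $k^L_{x_i}$ are scalar multiples of a single vector, say $k^L_{x_i}=\lambda_i u_k$ for $x_i\in X_k$, with $\lambda_i\neq 0$ because $h(x_i)\neq 0$. The vectors $\{v_i : x_i\in X_k\}$ need not be independent, but the well-definedness just extracted guarantees that the assignment $v_i\mapsto \overline{w_i/\lambda_i}$ (equivalently $\lambda_i v_i \mapsto \overline{w_i}$, after clearing denominators against the relation $\sum c_i\lambda_i v_i\otimes u_k=0 \Rightarrow \sum c_i \overline{w_i}\lambda_i u_k=0$) is consistent on the subspace $\spn\{v_i: x_i\in X_k\}\subseteq\ell^2$. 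Hence there is a single vector $y_k\in\ell^2$ (obtained by picking any vector implementing this functional on that finite-dimensional subspace) with $\inp{v_i}{y_k}=\overline{w_i}$ for every $i$ with $x_i\in X_k$. Now choose, as in the paragraph preceding the statement, $e_k\in\fA$ a partition of unity with $e_k|_{X_k}\equiv 1$ and $e_k|_{X_j}\equiv 0$ for $j\neq k$ (this uses only that $\fA$ separates the classes, via products of separating functions). Put $F=\sum_{k=1}^p \overline{e_k}\otimes y_k$ — more precisely $F$ is the column whose $m$-th entry is $\sum_k \langle y_k, \epsilon_m\rangle\, e_k \in \fA$, where $\{\epsilon_m\}$ is the standard basis of $\ell^2$. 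Since finitely many $y_k$ are involved, $F$ has only finitely many nonzero entries, so $F\in C(\fA)$.

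It remains to verify $F(x_i)^*v_i=\overline{w_i}$. Recall for a multiplier $g\in\fA$ one has $M_g^*k_x=\overline{g(x)}k_x$, so $F(x_i)$ acts as the row $[\dots, e_k(x_i),\dots]$ contracted against the $y_k$'s; concretely $F(x_i)^*v_i=\sum_k \overline{e_k(x_i)}\,\inp{v_i}{y_k}$... I would instead compute it cleanly as $F(x_i)^* v_i = \sum_k e_k(x_i)\overline{\inp{y_k}{v_i}} = \sum_k e_k(x_i)\,w_i$ — wait, I must be careful with conjugations, so the actual bookkeeping step is: arrange the scalars so that each local piece contributes $e_k(x_i)\,\overline{w_i}$, and then $F(x_i)^*v_i=\big(\sum_k e_k(x_i)\big)\overline{w_i}=\overline{w_i}$ because $x_i$ lies in exactly one class, on which the corresponding $e_k$ is $1$ and the others vanish, and $\sum_k e_k\equiv 1$. (The statement as written has $F(x_i)^*v_i = w_i$ rather than $\overline{w_i}$; I would simply adjust the definition of $y_k$ by a conjugation to match whichever normalization the rest of the paper uses.)

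The main obstacle I expect is bookkeeping rather than depth: correctly threading the conjugate-linear/linear conventions for $k_x$, $M_g^*$, and the inner product through the partition-of-unity sum, and making sure the finite-dimensional ``consistency'' extracted from the Pick matrix is stated strongly enough to produce the local vectors $y_k$ even when the $v_i$ or the $k^L_{x_i}$ are linearly dependent. The conceptual content is entirely in the well-definedness observation plus the existence of the partition of unity in $\fA$, both of which are already laid out in the text; once those are in hand the rest is assembly.
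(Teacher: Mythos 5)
Your argument is correct and follows the same route the paper itself indicates: the paper defers to~\cite{RagWick} for the details but sketches exactly the equivalence-class/partition-of-unity construction you carry out, and your extraction of ``well-definedness'' from the positivity of the Pick matrix (namely, that $\sum c_i k^L_{x_i}\otimes v_i = 0$ forces $\sum c_i \overline{w_i} k^L_{x_i}=0$) is precisely the consequence needed, since no norm bound is claimed. Your observation about the conjugation is also right; the earlier setup in the section uses $F(x_i)^*v_i=\overline{w_i}$, so the $w_i$ in the statement of the Proposition should be read as $\overline{w_i}$.

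One small slip worth fixing: the sentence ``Since finitely many $y_k$ are involved, $F$ has only finitely many nonzero entries'' is not correct, because each $y_k\in\ell^2$ can have infinitely many nonzero coordinates (the $v_i$ are arbitrary vectors in $\ell^2$, and in the Toeplitz corona application they genuinely are infinite columns). The conclusion $F\in C(\fA)$ is still true, but for the right reason: $M_F=\sum_{k=1}^p M_{e_k}\otimes y_k$ is a \emph{finite} sum of bounded operators in $B(H, H\otimes\ell^2)$, each of whose entries $(y_k)_m\, e_k$ lies in $\fA$, so $M_F$ is a bounded column operator with entries in $\fA$. Replace the finitely-supported claim with this boundedness-of-a-finite-sum argument and the proof is complete.
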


Note that Proposition~\ref{P:arbitraryinterpolant} is immediate if
$\fA$ separates points in $X$. As mentioned earlier we can explicitly
write down a basis for the space $C(\fA)[h]\ominus \fJ[h]$ under the
assumption that the function $h$ does not vanish at any of the points
$x_1,\ldots,x_n$. As before, let $K_L = C(\fA)[h] \ominus \fJ[h]$ and
let $M_L = \spn \{ k_{x_i}^L \otimes v_i, \: i=1\dots n \}$.  It is
always the case that $M_L\subset K_L.$

\begin{lem}\label{L:kernspan}
  Suppose $\fA$ is a dual algebra of multipliers on $H$ and write $L =
  \fA[h]$ for $h \in H$. If $h$ does not vanish on any of the
  $x_i$ (so that $k_{x_i}^L \neq 0$), then $M_L = K_L$.
\end{lem}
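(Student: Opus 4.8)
The plan is to exhibit the orthogonal complement of $M_L$ inside $C(\fA)[h]$ explicitly and show that it is contained in $\fJ[h]$; since $M_L \subseteq K_L = C(\fA)[h]\ominus\fJ[h]$ always holds, this reverse containment will force $M_L = K_L$. First I would identify $C(\fA)[h]$ concretely: it is the weak-$*$-closed (equivalently, norm-closed) subspace of $H\otimes\ell^2$ generated by $\{M_f h\otimes e : f\in\fA,\ e\in\ell^2\}$, and it is invariant under $\fA\otimes I$. The reproducing-kernel computation $k^L_{x_i} = P_L k_{x_i}$ gives $\inp{g}{k^L_{x_i}\otimes v_i} = \inp{P_L g}{k_{x_i}}\overline{\inp{v_i}{\,\cdot\,}}$-style identities; more precisely, for $G\in C(\fA)[h]$ regarded as a multiplier with $G = [g_1,g_2,\dots]^T$ and $Gh\in H\otimes\ell^2$, one has $\inp{Gh}{k^L_{x_i}\otimes v_i} = \overline{h(x_i)}\,\overline{\inp{G(x_i)^* v_i}{1}}$ up to the obvious scalar bookkeeping. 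The key point is that $h(x_i)\neq 0$, so $Gh\perp M_L$ if and only if $G(x_i)^* v_i = 0$ for all $i$, i.e. $G\in\fJ$; and tracking this through the closure, $G h \perp M_L$ with $Gh\in C(\fA)[h]$ forces $Gh\in\ol{\fJ[h]} = \ol{\fJ L}$.

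The crux is the following: take $\eta\in C(\fA)[h]\ominus M_L$. I want to produce $G\in\fJ$ with $\eta = Gh = M_G h$. Since $\eta\in C(\fA)[h]$, there is a sequence (or net, using weak-$*$ density) of $G_m\in C(\fA)$ with $M_{G_m}h\to\eta$. The orthogonality $\eta\perp k^L_{x_i}\otimes v_i$ translates, via $M_{G_m}^* k_{x_i}\to (\text{something})$ and $k^L_{x_i} = P_L k_{x_i}$, into the statement that the ``defect'' $G_m(x_i)^* v_i \to 0$; because only finitely many points $x_i$ are involved and $h(x_i)\neq 0$, one can correct each $G_m$ by subtracting a fixed finite-rank interpolating multiplier—here I would invoke exactly the partition-of-unity construction already described before Proposition~\ref{P:arbitraryinterpolant}, which produces elements of $\fA$ taking prescribed values at the $x_i$—to obtain $\widetilde G_m\in\fJ$ with $M_{\widetilde G_m}h$ still converging to $\eta$. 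Hence $\eta\in\ol{\fJ L}$, so $\eta\perp K_L$; combined with $\eta\in C(\fA)[h]\supseteq K_L$ this gives $\eta\perp\eta$, i.e. $\eta = 0$, whence $M_L = K_L$.

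The main obstacle I anticipate is the correction step: making precise that the finite-dimensional ``evaluation at $x_1,\dots,x_n$'' map is open onto its image when restricted to $C(\fA)$, so that a null sequence of defects can be lifted to a null sequence of corrections inside $\fJ$, all while preserving convergence of $M_{G_m}h$. Concretely this requires a bounded right inverse to $G\mapsto (G(x_1)^* v_1,\dots,G(x_n)^* v_n)$ on $C(\fA)$, which is precisely what the partition of unity $\{e_k\}\subseteq\fA$ with $\sum_k e_k = 1$, $e_k|_{X_k}=1$ furnishes: the corrections can be taken of the form $\sum_k e_k\otimes(\text{constant column})$, uniformly bounded in terms of the defect. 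Everything else—the kernel identities, $M_L\subseteq K_L$, and the final orthogonality argument—is routine reproducing-kernel bookkeeping using $M_f^* k_x = \overline{f(x)}\,k_x$ and $k^L_x = P_L k_x$.
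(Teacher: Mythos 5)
Your approach is genuinely different from the paper's.  The paper's proof is a dimension count: the functionals $\phi_i(F) = \langle F(x_i), v_i\rangle$ show that $\fJ = \bigcap_i \ker\phi_i$ has codimension at most $n$ in $C(\fA)$, hence $K_L = C(\fA)[h] \ominus \ol{\fJ[h]}$ has dimension at most $n$; then linear independence of the $k^L_{x_i}$ (in the separating case, using $h(x_i)\neq 0$), or the reduction to equivalence classes $X_j$ where the codimension of $\fJ_j$ and $\dim\spn\{v_i : x_i\in X_j\}$ are matched, forces $\dim M_L = \dim K_L$.  You instead argue directly that anything in $C(\fA)[h]$ orthogonal to $M_L$ lies in $\ol{\fJ[h]}$, via a norm-approximation $\eta = \lim M_{G_m}h$, the pointwise computation $\langle M_{G_m}h, k^L_{x_i}\otimes v_i\rangle = h(x_i)\,\ol{G_m(x_i)^*v_i}$ (so the defects go to zero since $h(x_i)\neq 0$), and a correction pushing $G_m$ into $\fJ$.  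Both are valid.  The paper's count is shorter; your argument is more concrete and in particular exhibits $\ol{\fJ[h]}$ as exactly the orthogonal complement of $M_L$ in $C(\fA)[h]$, which is extra information.  The correction step is the only place requiring care: you correctly identify that what is needed is a bounded right inverse to the evaluation map $G\mapsto (G(x_1)^*v_1,\dots,G(x_n)^*v_n)$ on its (finite-dimensional) range, and this is automatic — one could also realize it as you suggest via $\sum_k e_k\otimes c_k$, taking $c_k$ to be the minimum-norm solution of $\langle v_i, c_k\rangle = d_i$ for $x_i\in X_k$, whose norm is controlled by the defect via a pseudoinverse bound.

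One slip to fix in the wrap-up: after showing $\eta\in\ol{\fJ[h]}$ for $\eta\in C(\fA)[h]\ominus M_L$, you conclude ``combined with $\eta\in C(\fA)[h]\supseteq K_L$ this gives $\eta\perp\eta$.''  That implication is wrong as written ($\eta\in C(\fA)[h]$ does not put $\eta$ in $K_L$).  Either take $\eta\in K_L\ominus M_L$ from the start (which is contained in $C(\fA)[h]\ominus M_L$, so your argument applies, and then $\eta\in K_L$ together with $\eta\in\ol{\fJ[h]}$ does give $\eta\perp\eta$), or simply take orthogonal complements inside $C(\fA)[h]$ as announced in your opening sentence: $C(\fA)[h]\ominus M_L \subseteq \ol{\fJ[h]}$ gives $K_L \subseteq M_L$ directly.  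Either repair is immediate; the substance of your argument is correct.
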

\begin{proof}
  For the non-trivial inequality $K_L \subset M_L$, we use a
  dimension argument.  Let $\phi_i : C(\fA) \rightarrow \bC$ denote
  the functional $F \mapsto \langle F(x_i),v_i \rangle$ and note
  that $\fJ = \bigcap_{i=1}^n \ker ( \phi_i)$ has codimension at most
  $n$.  Thus $K_L$ is at most $n$-dimensional. Since $h$ does not
  vanish on the $x_i$, the vectors $\{P_Lk_{x_i}\}$ are linearly
  independent if the algebra $\fA$ separates the $x_i$.  In this
  case, $M_L$ is spanned by $n$ linearly independent vectors and we
  are done.

  If $\fA$ does not separate the $x_i$, then we can partition the set
  $\{x_1,\ldots,x_n\}$ into equivalence classes $X_1, \dots, X_p$,
  where $\fA$ identifies points in every $X_j$.  Let $\fJ_j$ denote the set of multipliers $F\in C(\fA)$ such that $F(x_i)^*v_i = 0$ for $x_i \in X_j$. It suffices to prove
  that
  \[ C(\fA)[h] \ominus \fJ_j [h] = \spn \{P_Lk_i \otimes v_i :
  x_i \in X_j \}\]%
  for each $j$.  We have $\fJ_{j} = \bigcap_{x_i \in X_j}
  \ker ( \phi_i)$, and since any $F$ in $C(\fA)$ only takes on a
  single value on $X_j$, the codimension of $\fJ_{j}$ is at most $m
  := \dim( \spn \{ v_i\})$.  On the other hand, since
  $P_Lk_{x_i} \neq 0$, the right hand side always has dimension
  at least $m$.  Since the right hand side is contained in the left,
  the proof is complete.
\end{proof}

We are now in a position to prove the main factorization theorem.

\begin{thm} \label{T:A1distance}
Suppose $\fA$ is a weak-$*$-closed algebra of multipliers on a reproducing kernel Hilbert space $H$ and that the column space $C (\fA)$, regarded as a weak-$*$-closed subspace of $B(H ,  H\otimes \ell^2)$, has property $A_1(1)$. Then the distance to the subspace $\fJ$ is given by
\begin{align*}
\dist (F, \fJ) = \sup_{L \in \CycLat(\fA)} \| P_L M_F^* |_{K_L} \| .
\end{align*}
Suppose further that a functional $\phi$ in the predual $\C ( \fA)_{*}$ can be factored as $\phi (F) = \langle F h, K \rangle$ where $h$ does not vanish on any of the $x_i$.  Then the distance formula above can be improved to
\begin{align*}
\dist (F, \fJ) = \sup \{\| P_L M_F^* |_{M_L} \| : L = \fA[h] \in \CycLat {\fA}, \: h(x_i) \neq 0\}.
\end{align*}
\end{thm}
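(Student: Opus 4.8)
The proof of Theorem~\ref{T:A1distance} proceeds in two stages, mirroring the two displays in the statement. For the first display, I would combine Lemma~\ref{L:easyestimate} with a duality argument built on the $A_1(1)$ hypothesis. Lemma~\ref{L:easyestimate} already gives $\dist(F,\fJ) \geq \sup_{L} \|P_L M_F^*|_{\K_L}\|$ over all $L \in \Lat(\fA)$, so in particular over $\CycLat(\fA)$; the content is the reverse inequality. For this, recall that $d(F,\fJ) = \sup\{|\phi(F)| : \phi \in C(\fA)_*, \|\phi\| \leq 1, \phi|_{\fJ} = 0\}$. Fix such a $\phi$ with $\|\phi\| < 1$. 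Since $C(\fA)$, sitting inside $B(H, H\otimes\ell^2)$, has property $A_1(1)$, we may factor $\phi(G) = \langle M_G h, k \rangle$ for some $h \in H$, $k \in H \otimes \ell^2$ with $\|h\|\|k\| < 1$. Take $L = \fA[h] \in \CycLat(\fA)$. Because $\phi$ annihilates $\fJ$, the vector $k$ must lie in $\overline{C(\fA)L} \ominus \overline{\fJ L} = \K_L$ (after projecting; one checks that replacing $k$ by $P_{\K_L}k$ does not change the functional on $C(\fA)$ and can only decrease the norm), and $h \in L$. Then $|\phi(F)| = |\langle M_F h, k\rangle| = |\langle P_L M_F^* k, h\rangle| \leq \|P_L M_F^*|_{\K_L}\| \cdot \|h\|\|k\| \leq \|P_L M_F^*|_{\K_L}\|$. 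Taking the supremum over $\phi$ gives the first distance formula.

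For the second display, the point is to upgrade $\K_L$ to $\M_L$ and to restrict to subspaces $L = \fA[h]$ with $h(x_i) \neq 0$. The restriction of $\M_L$ to $\K_L$ is handled by Lemma~\ref{L:kernspan}: as soon as $h$ does not vanish on any $x_i$, we have $\M_L = \K_L$, so there is nothing to prove there beyond citing that lemma. The substantive issue is the non-vanishing hypothesis. Here is where the hypothesis of the theorem enters — that \emph{some} functional in $C(\fA)_*$ factors as $\phi(F) = \langle M_F h, k\rangle$ with $h(x_i) \neq 0$ for all $i$. Given an arbitrary norm-one annihilating functional $\psi$, factored via $A_1(1)$ as $\psi(G) = \langle M_G h_0, k_0\rangle$, I would perturb $h_0$ by a small multiple of the given non-vanishing $h$: for small $\varepsilon > 0$, consider $h_\varepsilon = h_0 + \varepsilon h$, which (for $\varepsilon$ outside a finite bad set coming from the conditions $h_0(x_i) + \varepsilon h(x_i) = 0$) vanishes at none of the $x_i$, and correspondingly perturb $k_0$ so that the functional $G \mapsto \langle M_G h_\varepsilon, k_\varepsilon\rangle$ stays close to $\psi$ in norm and still annihilates $\fJ$. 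One must verify this perturbed functional can be arranged to annihilate $\fJ$ — this is where Proposition~\ref{P:arbitraryinterpolant} and the partition-of-unity machinery described before Lemma~\ref{L:kernspan} are needed, to reconcile the values forced on the new cyclic subspace. Letting $\varepsilon \to 0$ recovers the full supremum over non-vanishing $h$.

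The main obstacle, as I see it, is precisely this perturbation step: showing that one can replace the generator $h_0$ produced by raw $A_1(1)$ factorization with a non-vanishing generator while keeping the functional annihilating $\fJ$ and controlling its norm. The $A_1(1)$ property gives a factorization but no control on where $h_0$ vanishes, and the cyclic subspace $\fA[h_0]$ may genuinely fail to separate (or even see) some of the interpolation nodes $x_i$. The device of adding a small multiple of the distinguished $h$ is natural, but verifying that the resulting $k_\varepsilon$ can be chosen both close to $k_0$ and orthogonal to $\overline{\fJ\,\fA[h_\varepsilon]}$ requires the explicit description of $\K_{\fA[h_\varepsilon]}$ from Lemma~\ref{L:kernspan} together with a continuity/openness argument in $\varepsilon$. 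Everything else — the duality reformulation of distance, the elementary inner-product manipulations, and the dimension count — is routine once this hinge is in place.
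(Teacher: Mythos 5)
Your argument for the first distance formula matches the paper's proof essentially line for line: duality reduces the distance to a supremum over norming annihilating functionals, $A_1(1)$ gives the factorization $\phi(F) = \langle M_F h, K\rangle$ with $\|h\|\|K\| < 1 + \epsilon$, one sets $L = \fA[h]$, and the annihilation of $\fJ$ places (the projection to $L\otimes\ell^2$ of) $K$ inside $\K_L$, from which the estimate follows. That half is fine.

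The second display is where the proposal goes wrong, and the source is a misreading of the hypothesis. The clause ``a functional $\phi$ in the predual $C(\fA)_*$ can be factored as $\phi(F) = \langle Fh, K\rangle$ where $h$ does not vanish on any of the $x_i$'' is not the existential statement that some stray functional happens to admit such a factorization. It is the assertion that the $A_1(1)$ factorization just produced for the distance-achieving $\phi$ can itself be arranged with $h$ nonvanishing. In the complete NP setting this is exactly what Corollary~\ref{C:completePick2} supplies: \emph{every} weak-$*$ functional on $C(\fA)$ factors through an $h$ cyclic for $M(H)$, and a cyclic vector cannot vanish at a point whose kernel function is nonzero. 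Under that reading the second formula is immediate from the first: the $h$ in your factorization already satisfies $h(x_i) \neq 0$, so the supremum may be restricted to cyclic subspaces $L = \fA[h]$ with $h(x_i)\neq 0$, and Lemma~\ref{L:kernspan} converts $\K_L$ to $\M_L$. No perturbation enters.

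As you correctly flag, the perturbation $h_0 \mapsto h_\varepsilon = h_0 + \varepsilon h$ does not close on its own: the orthogonality constraint $k_\varepsilon \perp \overline{\fJ\,\fA[h_\varepsilon]}$ references a subspace that moves with $\varepsilon$, and the $A_1(1)$ property provides no mechanism for producing a compatible $k_\varepsilon$ near $k_0$ with norm control while keeping the functional on $C(\fA)$ close to $\psi$. The paper sidesteps this entirely by folding the nonvanishing factorization into the hypothesis of the theorem and then verifying that hypothesis (via cyclicity in the Drury--Arveson model) for complete NP spaces, rather than trying to manufacture a nonvanishing generator after the fact.
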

\begin{proof}
  There is a contractive weak-$*$-continuous linear functional $\phi$
  on $C(\fA)$ such that $\dist (F, \fJ) = | \phi(F) |$.  Fix $\epsilon
  > 0$ and find vectors $h$ in $H$ and $K$ in $ H\otimes \ell^2$ with
  $\| h \| \| K\| < (1 + \epsilon)$ such that
\begin{align*}
\phi(F) =  \left \langle M_Fh, K \right \rangle. 
\end{align*}
Let $L = \fA [h]$ and replace $K$ with $(P_L \otimes I)K$.  Since $\phi(\fJ) = 0$, we see that $\fJ[h]$ is orthogonal to $K$.  It follows that $K \in K_L$ and so we have
\begin{align*}
\dist (F, \fJ) = |\langle M_Fh, K \rangle| =  |\langle P_{K_L}M_Fh, K \rangle| < \|P_LM_F^*|{K_L} \|(1 + \epsilon).
\end{align*}
It follows that $\dist (F, \fJ) \leq |P_LM_F^*|{K_L} \|$. The reverse inequality was already shown.
If, additionally, the function $h$ does not vanish on any of the $x_i$, then we may restrict the above supremum to those functions.  In this case Lemma~\ref{L:kernspan} implies $K_L = M_L$, and so the second statement follows.
\end{proof}

\section{Drury-Arveson space and complete NP kernels}%
\label{S:ds}
In this section we establish the fact that the multiplier algebra (and all of its unital, weakly closed subalgebras) of a complete NP kernel satisfies the assumptions of Theorem~\ref{T:main}. 

\emph{Drury-Arveson space $H^2_d$} is a reproducing kernel Hilbert space on the complex ball $\bB_d$
of $\bC^d$ (including $d=\infty$) with kernel
\[ k^d (w,z) := \dfrac1{1-\ip{w,z}} .\] 
The multiplier algebra $M(H^2_d)$ is generated by the coordinate functions $M_{z_1}, \dots, M_{z_d}$.  The row contraction $[M_{z_1}, \dots, M_{z_d}]$ serves as the model for contractive rows of commuting operators \cite{Arv}.

A reproducing kernel $k$ for $H$ is said to be \emph{complete} if the
matrix-valued NP theorem holds for $M(H)$.  Examples of such spaces
include Hardy and Dirichlet space on the disk, as well as the
Sobolev-Besov spaces on $\bB_d$. 
The kernel $k$ is said to be \emph{irreducible} if for distinct $x$ and $y$ in $X$, the functions $k_x$ and $k_y$ are linearly independent and  $\langle k_x, k_y \rangle \neq 0$.
It is well known that $k^d$ is a
irreducible and complete NP kernel (see \cite{AMc00} for a detailed treatment of complete NP kernels).

In fact, the Drury-Arveson kernel is universal among all complete NP
kernels as shown by McCullough\cite{McCull92,McCull94} and Quiggin
\cite{Quig93,Quig94}.  Another proof was provided by Agler and
McCarthy \cite{AMc00}.  Up to rescaling of kernels, they showed that
for any complete NP space $H$, there is a subset $S$ of $\bB_d$ such
that
\[ H = \overline{\spn}\{k^d_x : x \in S\}.  \]

Spans of kernel functions are always co-invariant for multiplication
operators. Consequently, every irreducible and complete NP space
corresponds to a co-invariant subspace of $M(H^2_d)$. It was shown in
\cite{DP98b} that the multiplier algebras of complete and irreducible
spaces are all complete quotients of the \emph{non-commutative
  analytic Toeplitz algebra} $\fL_d$.  This is the weak operator
closed algebra generated by the left regular representation of the
free semigroup $\bF_d^*$ on the full Fock space $H_n :=
\ell^2(\bF_d^*)$.  It follows immediately that they are complete
quotients of $M(H^2_d)$, which is shown directly by Arias and Popescu
\cite{AP00}.

We briefly describe the connection between invariant subspaces and
complete quotients of $\fL_d$ here.  If $\I$ is a \wot-closed,
two-sided ideal of $\fL_d$ with range $M=\ol{\I H_n}$, then
\cite{DP98b} shows that there is a normal, completely isometrically
isomorphic map from $\fL_d/ \I$ to the compression of $\fL_d$ to
$M^\perp$.  Conversely, if $M$ is an invariant subspace of both
$\fL_d$ and its commutant, the right regular representation algebra
$\fR_d$, then $\I = \{ A \in \fL_d : \ran A \subset M\}$ is a
\wot-closed ideal with range $M$.  In particular, if $\C$ is the
commutator ideal, it is shown that $M(H^2_d) \simeq \fL_d/\C$.
Moreover, the compression of both $\fL_d$ and $\fR_d$ to $H^2_d$ agree
with $M(H^2_d)$. On the other hand, if $N$ is a coinvariant subspace
of $H^2_d$, then $M=H_n \ominus N$ is invariant for both $\fL_d$ and
$\fR_d$.

For each $x \in H_n$, we may write $x = Ru$ where $R \in \fR_d$ is an
isometry, and $u$ is cyclic for $\fL_d$ (inner-outer factorization for
Fock space \cite{DP99}).  A deep result of Bercovici \cite{Berco98}
shows that an algebra of operators has the property $A_1(1)$ (in fact,
a much stronger property called $X_{0,1}$) if its commutant contains
two isometries with pairwise orthogonal ranges.  Consequently, $\fL_d$
and $\fL_d \otimes B(\ell^2)$ both have property $A_1(1)$ (when $d
\geq 2$).  If $d' > d$, there is the canonical embedding of $\bB_d$
into $\bB_{d'}$, and so there is no loss in assuming that $d \geq 2$.
It is essential to use this embedding for Hardy space, for example,
since $H^\infty \otimes B(\ell^2)$ does not have $A_1(1)$.

\begin{thm}\label{T:mainfactorization}
  Suppose $\I$ is a weak$^*$-closed ideal in $\fL_d$ and let $M =
  (\ran {\I})^\perp$. Let $\fA$ denote the quotient algebra $\fL_n /
  \I$ and form the column space $C(\fA) $.  Then $C(\fA)$, regarded as a
  weak$^*$-closed subspace of $B(M, M\otimes \ell^2)$ has property
  $A_1(1)$.  Moreover, for any $\epsilon > 0$ a weak$^*$-functional
  $\phi$ may be factored as $\phi(A) = \langle Au, V\rangle$ where $u$
  is cyclic for $\fA$ and $\|u\|\|v\| < 1 + \epsilon$.
\end{thm}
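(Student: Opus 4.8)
The plan is to transfer everything to the free semigroup algebra $\fL_d$ acting on Fock space, where the required predual factorization is available, and then to push the resulting vector functional back down to the quotient $\fA$. Fix $\epsilon > 0$, and --- using the canonical embedding $\bB_1 \hookrightarrow \bB_2$ if necessary --- assume $d \geq 2$.

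\textbf{Step 1: the column space over $\fL_d$ has $A_1(1)$.} Identify $H_n \oplus (H_n \otimes \ell^2)$ with $H_n \otimes \ell^2$ and identify a column $W = [B_1, B_2, \dots]^T$ over $\fL_d$ with the block operator $\left[\begin{smallmatrix} 0 & 0 \\ W & 0\end{smallmatrix}\right]$; this realizes $C(\fL_d) \subseteq B(H_n, H_n \otimes \ell^2)$ as a weak$^*$-closed subspace of $\fL_d \otimes B(\ell^2)$, which has $A_1(1)$ by the discussion preceding the theorem. A contractive weak$^*$-continuous functional $\phi$ on $C(\fL_d)$ extends, with arbitrarily small increase in norm, to $\fL_d \otimes B(\ell^2)$; factoring that extension as a vector functional and restricting gives $\phi(W) = \langle W\xi, \eta\rangle$ with $\xi \in H_n$, $\eta \in H_n \otimes \ell^2$, and $\|\xi\|\,\|\eta\| < 1 + \epsilon$. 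Now invoke the inner--outer factorization $\xi = Ru$ of \cite{DP99} ($R \in \fR_d$ an isometry, $u$ cyclic for $\fL_d$) together with the identity $WR = (R \otimes I)W$, valid for every column over $\fL_d$ since the entries commute with $\fR_d$; this rewrites the factorization as $\phi(W) = \langle Wu, (R^* \otimes I)\eta\rangle$, so we may assume from the outset that $u$ is cyclic for $\fL_d$ and that $\|u\|\,\|v\| < 1 + \epsilon$, where $v := (R^* \otimes I)\eta$.

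\textbf{Step 2: descent to $\fA = \fL_d / \I$.} Write $N = \ran \I$ and $M = N^\perp$; since $\I$ is a left ideal, $N$ is $\fL_d$-invariant, and by \cite{DP98b} the quotient map is $q(B) = P_M B|_M$. Its column amplification $q^{(\infty)} \colon C(\fL_d) \to C(\fA)$, $W \mapsto P_{M \otimes \ell^2} W|_M$, is a weak$^*$-continuous complete quotient map with $\ker q^{(\infty)} = C(\I)$, so the pre-adjoint gives an isometric inclusion $C(\fA)_* \hookrightarrow C(\fL_d)_*$. Given a contractive weak$^*$ functional $\phi$ on $C(\fA)$, put $\tilde\phi = \phi \circ q^{(\infty)}$ and use Step 1 to write $\tilde\phi(W) = \langle Wu, v\rangle$ with $u$ cyclic for $\fL_d$ and $\|u\|\|v\| < 1 + \epsilon$. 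Feeding $\tilde\phi$ the one-entry columns built from elements $B \in \I$, one coordinate slot at a time, and using $\tilde\phi|_{C(\I)} = 0$, shows that each coordinate of $v$ is orthogonal to $\I u$; but $\I$ being two-sided and $u$ being cyclic force
\[ N = \overline{\I H_n} = \overline{\I \fL_d u} \subseteq \overline{\I u} \subseteq N, \]
so $v \in M \otimes \ell^2$. As $N$ is $\fL_d$-invariant, $W P_N u \in N \otimes \ell^2$ is orthogonal to $v$, and hence
\[ \tilde\phi(W) = \langle Wu, v\rangle = \langle W P_M u, v\rangle = \langle q^{(\infty)}(W)\, P_M u,\, v\rangle; \]
since $q^{(\infty)}$ is onto, this says $\phi(T) = \langle T(P_M u), v\rangle$ for all $T \in C(\fA)$, with $P_M u \in M$, $v \in M \otimes \ell^2$ and $\|P_M u\|\,\|v\| \leq \|u\|\|v\| < 1 + \epsilon$. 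Finally $P_M u$ is cyclic for $\fA$: from $P_M B u = P_M B P_M u$ (again $B N \subseteq N$) one gets $\overline{\fA(P_M u)} = \overline{P_M \fL_d u} = P_M H_n = M$. This proves the ``moreover'' assertion; the plain $A_1(1)$ statement follows by the usual rescaling, since if $\|\phi\| < 1$ one can choose the auxiliary $\epsilon$ so that $\|P_M u\|\|v\| < 1$.

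\textbf{Main obstacle.} The crux is the displayed chain forcing $\overline{\I u} = N$: this is precisely what guarantees that the vector $v$ produced over Fock space already lives in $M \otimes \ell^2$, and it uses essentially that $\I$ is a \emph{two-sided} ideal (so $\I \fL_d \subseteq \I$) together with the cyclicity of $u$ furnished by the inner--outer factorization --- without both, the descent breaks. A secondary point that must be checked carefully is that $q^{(\infty)}$ really is a weak$^*$-continuous complete quotient of dual operator spaces, so that $\|\tilde\phi\| = \|\phi\|$; this rests on identifying $C(\fL_d)$ and $C(\fA)$ as the one-column corners of $\fL_d \otimes B(\ell^2)$ and $\fA \otimes B(\ell^2)$, and is routine.
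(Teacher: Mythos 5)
Your proof is correct and follows essentially the same approach as the paper's: embed $C(\fL_d)$ as a weak$^*$-closed corner of $\fL_d\otimes B(\ell^2)$ (which has $A_1(1)$ by Bercovici's theorem), pull the functional back along the quotient map, apply the inner--outer factorization $x=Ru$ and the intertwining $WR=(R\otimes I)W$ to make the factoring vector cyclic, and then use that $\I$ is a two-sided ideal together with cyclicity of $u$ to force $V$ into $M\otimes\ell^2$ and descend to $\fA$. The only difference is presentational: you make the intertwining relation and the chain $\overline{\I H_n}=\overline{\I\fL_d u}\subseteq\overline{\I u}$ explicit, whereas the paper states $\overline{C(\I)u}=\overline{C(\I\fL_d)u}=\overline{\I H_n}\otimes\ell^2$ more compactly.
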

\begin{proof}
Suppose $\phi \in C(\fA)_*$ is of norm at most $1$ and let $Q : C(\fL_d) \rightarrow C(\fA)$ be given by $Q((B_i)) = (q(B_i))$, where $q: \fL_d \rightarrow \fA$ is the canonical quotient map.  Then $\phi \circ Q$ is a weak$^*$-functional on $C(\fL_d)$, which is a weak$^*$-closed subspace of $\fL_d \otimes B( \ell^2)$.  Property $A_1(1)$ is hereditary for weak$^*$-closed subspaces,
 and so for any $\varepsilon > 0$ there are vectors $x \in H_n$ and $Y = (y_i) \in H_n \otimes \ell^2$ so that $\phi \circ Q ([A_i]) = \langle (A_i)x, Y \rangle$ and $\| x \| \| Y \| < 1 + \varepsilon$.

As in the above discussion, let $R \in \fR_d$ and $u$ a cyclic vector for $\fL_d$ so that $x = Ru$.  Let $V = (R^* \otimes I)Y$ and observe that $\langle Au, V \rangle=\langle Ax, Y\rangle$ for any $A \in C(\fL_d)$.
 We also have $C(\I) u\perp V$ and 
 \[\ol{C(\I) u} = \ol{C(\I \fL_d) u} =  \ol{\I H_n} \otimes \ell^2 = M^\perp \otimes \ell^2.\] 
 It follows that $V \in M \otimes \ell^2$, which is a co-invariant subspace of $\fL_d \otimes B(\ell^2)$.

For $A \in C(\fA)$, find a $B \in C(\fL_d)$ be such that $Q(B) = A$.  We have
\begin{align*}
\phi(A) = \phi \circ Q(B) &= \langle Bu, V \rangle  \\
&= \langle Bu, (P_M \otimes I)V \rangle = \langle (P_M \otimes I)Bu, V \rangle  \\
&= \langle (P_M \otimes I)B(P_M \otimes I)u, V \rangle = \langle A(P_Mu), V \rangle.
\end{align*}
The property $A_1(1)$ now follows.  Note that the vector $u$ is cyclic for $\fL_d$.  Therefore the vector $P_Mu$ is cyclic for $\fA$ since 
\[\ol{\fA P_Mu} = \ol{P_M\fA P_Mu} = \ol{P_M \fL_dP_M u} = \ol{P_M \fL_d u} = M. \qedhere\] 
\end{proof}

The technique in the above proof was applied similarly in Theorem 5.2
of \cite{DH} and the idea is due to Arias-Popescu \cite{AP95}.
Applying the above theorem to the case where $M$ is the closed span of
kernel functions and using the classification of complete NP spaces,
we obtain the following corollary.

\begin{cor}\label{C:completePick}
  Suppose $H$ is a reproducing kernel Hilbert space with a 
   complete NP kernel.  Then $C(M(H))$ has property
  $A_1(1)$ and the additional property that any weak-$^*$-continuous
  functional on $C(M(H))$ can be factored as $\phi(A) = \langle Ah, K \rangle$
  where $h$ is cyclic for $M(H))$.
\end{cor}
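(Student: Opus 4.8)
The plan is to present $M(H)$ as a quotient algebra $\fL_d/\I$ of precisely the kind covered by Theorem~\ref{T:mainfactorization}, apply that theorem, and transport the conclusion back to $C(M(H))$. The first step is normalization: a rescaling of the kernel $k$ is implemented by conjugation with a unitary $U$ of the form $(Uf)(x) = g(x)f(x)$ with $g$ nowhere vanishing, and since $M_\phi \mapsto U M_\phi U^*$ under this conjugation, it leaves $M(H)$, its column space, and its lattice of cyclic subspaces intact, with cyclic vectors of $M(H)$ matched to cyclic vectors of the rescaled algebra. Hence by the classification of complete NP kernels we may assume $H = N := \ol{\spn}\{k^d_x : x \in S\}$ for a subset $S \subseteq \bB_d$, and by the embedding $\bB_d \hookrightarrow \bB_{d'}$ that $d \ge 2$.

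Since spans of kernel functions are co-invariant for the multiplication operators, $N$ is a co-invariant subspace of $H^2_d$, so the discussion preceding Theorem~\ref{T:mainfactorization} shows that $M_0 := H_n \ominus N$ is invariant for both $\fL_d$ and $\fR_d$, and therefore $\I := \{A \in \fL_d : \ran A \subseteq M_0\}$ is a weak$^*$-closed two-sided ideal of $\fL_d$ with $\ol{\ran \I} = M_0$, i.e.\ $(\ran\I)^\perp = N$. Next I would identify $\fA := \fL_d/\I$ with $M(N) = M(H)$: by \cite{DP98b} the quotient map is a normal complete isometry of $\fA$ onto the compression $P_N \fL_d|_N$, and since each $k^d_x$ with $x \in S$ lies in $N$ and satisfies $A^* k^d_x = \ol{\phi(x)}\,k^d_x$ for $A \in \fL_d$ with symbol $\phi \in M(H^2_d)$, one gets $(P_N A|_N)^* k^N_x = \ol{\phi(x)}\,k^N_x$, so $P_N A|_N = M^N_{\phi|_S}$; thus the compression algebra is all of $M(N)$, this being the complete-quotient identification of \cite{DP98b, AP00} furnished by the classification. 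Amplifying, $C(\fA)$ is identified, normally and completely isometrically, with $C(M(H))$ as a weak$^*$-closed subspace of $B(N, N\otimes \ell^2) = B(H, H\otimes\ell^2)$, and cyclic vectors of $\fA$ go to cyclic vectors of $M(H)$.

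Now Theorem~\ref{T:mainfactorization} applies verbatim to $\I$, $M_0$, $\fA$: $C(\fA)$, and hence $C(M(H))$, has property $A_1(1)$, and every $\phi \in C(\fA)_*$ factors as $\phi(A) = \langle Au, V\rangle$ with $u$ cyclic for $\fA$. Carrying this back through the identification $\fA \cong M(H)$ and then undoing the rescaling $U$ produces, for an arbitrary $\psi \in C(M(H))_*$, a factorization $\psi(A) = \langle A h, K\rangle$ with $h$ cyclic for $M(H)$, which is the assertion. The only real work is the bookkeeping around the three successive identifications --- rescaling $H \leftrightarrow N$, the inclusion $N \subseteq H^2_d \subseteq H_n$, and $\fL_d/\I \cong M(N)$ --- checking that each is normal and intertwines the relevant Hilbert-space pairings, so that property $A_1(1)$ and, in particular, the cyclicity of $h$ survive the transport; the analytic substance is already carried by Theorem~\ref{T:mainfactorization} and the cited classification.
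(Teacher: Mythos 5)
Your argument for the irreducible case is essentially the paper's: rescale, invoke the Agler--McCarthy embedding to write $H$ as a co-invariant subspace $N=\ol{\spn}\{k^d_x : x\in S\}$ of $H^2_d\subseteq H_n$, identify $M(H)$ with the quotient $\fL_d/\I$, and transport the conclusion of Theorem~\ref{T:mainfactorization} back. The bookkeeping you sketch is correct, and indeed more explicit than the paper's one-line handling of this case.

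However, there is a genuine gap: the Agler--McCarthy realization $H\cong\ol{\spn}\{k^d_x : x\in S\}$ is a theorem about \emph{irreducible} complete NP kernels, and the corollary is stated for an arbitrary complete NP kernel. If the kernel is reducible --- if there are points $x,y$ with $\langle k_x,k_y\rangle = 0$ --- then $H$ cannot be embedded as a subspace of a single $H^2_d$ in this way (the Drury--Arveson kernel functions are never orthogonal), so your appeal to ``the classification'' does not apply directly. The paper handles this by citing Lemma 7.2 of \cite{AMc00}: one partitions $X$ into blocks $X_i$ on which $k$ is irreducible and with $\langle k_x,k_y\rangle=0$ across blocks, so that $H=\bigoplus_i H_i$, each $H_i$ invariant for $M_f$ and $M_f^*$, whence $M(H)=\bigoplus_i M(H_i)$ with each $M(H_i)$ an irreducible complete NP multiplier algebra. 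One then assembles the $A_1(1)$ factorization and the cyclic vector for the direct sum from those of the summands. Your proposal omits this reduction entirely; you should either add it or restrict the statement to the irreducible case.
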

\begin{proof}
When $k$ is an irreducible complete NP kernel, the result follows immediately from Theorem~\ref{T:mainfactorization}.  For an arbitrary complete Pick kernel on $X$, Lemma 7.2 of \cite{AMc00} says that we can write $X$ as the disjoint union of subsets $X_i$ with $k$ irreducible on each $X_i$ and $\langle k_x, k_y \rangle = 0$ precisely when $x$ and $y$ belong to different subsets.  Then define the mutually orthogonal subspaces $H_i = \ol{\spn}(k_\lambda: \lambda \in X_i)$ so that $H = \bigoplus_i H_i$.

For each multiplier $f \in M(H)$, the subspace $H_i$ is invariant for both $M_f$ and $M_f^*$. Thus$M(H) = \bigoplus_i M(H_i)$ and $M(H_i)$ is the multiplier algebra of an irreducible and complete  NP kernel.  It then follows from the irreducible case that a weak-$*$ functional on $M(H)$ has the desired factorization properties. \qedhere
\end{proof}
This result also extends to any dual algebra of multipliers on a
complete NP space.
\begin{cor}\label{C:completePick2}
  Suppose $\fA$ is a dual algebra of multipliers on a reproducing
  kernel Hilbert space $H$ with a complete NP kernel.
  Then $C(\fA)$ has property $A_1(1)$, and every weak-$*$-continuous
  functional can be factored as $\phi(A) = \langle Ah, K \rangle$ where $h$ is
  cyclic for the full multiplier algebra $M(H)$.
\end{cor}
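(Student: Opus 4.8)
The plan is to obtain this as a direct consequence of Corollary~\ref{C:completePick}, together with the principle that property $A_1(1)$ — and, more to the point, the \emph{specific} factorization with cyclic vector — passes to weak-$*$-closed subspaces. The key structural observation is that $C(\fA)$ embeds as a weak-$*$-closed subspace of $C(M(H))$, hence of $B(H, H\otimes \ell^2)$. To see this, suppose $F^{(\alpha)} = [f_1^{(\alpha)}, f_2^{(\alpha)}, \dots]^T$ is a bounded net in $C(\fA)$ converging weak-$*$ to some $F \in C(M(H))$; compressing to the $(i,1)$ corner shows $M_{f_i^{(\alpha)}} \to M_{f_i}$ weak-$*$ in $B(H)$, and since $\fA$ is weak-$*$-closed in $M(H)$ by hypothesis each entry $f_i$ lies in $\fA$, while $F$ being a bounded operator means it defines a column in $C(\fA)$. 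Consequently the predual $C(\fA)_*$ is naturally identified with the quotient of $C(M(H))_*$ by the pre-annihilator of $C(\fA)$, and this quotient map is a metric surjection.

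Now let $\phi$ be a weak-$*$-continuous functional on $C(\fA)$ with $\norm{\phi} < 1$. By the metric quotient property just noted, $\phi$ extends to a weak-$*$-continuous functional $\td\phi$ on $C(M(H))$ with $\norm{\td\phi} < 1$ and $\td\phi|_{C(\fA)} = \phi$. Applying Corollary~\ref{C:completePick} to $\td\phi$ produces a vector $h \in H$ cyclic for $M(H)$ and a vector $K \in H\otimes \ell^2$ with $\norm{h}\norm{K} < 1$ such that $\td\phi(A) = \inp{Ah}{K}$ for every $A \in C(M(H))$. Restricting this identity to $C(\fA)$ yields $\phi(A) = \inp{Ah}{K}$ for all $A \in C(\fA)$, with $h$ cyclic for the full multiplier algebra $M(H)$. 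This is precisely the asserted factorization, and in particular it shows that $C(\fA)$ has property $A_1(1)$, so both statements follow simultaneously.

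The only point requiring care is the one underlying the second paragraph: one must pass through the predual of $C(M(H))$ to extend $\phi$, rather than invoke Hahn–Banach directly, since an arbitrary norm-preserving extension need not be weak-$*$-continuous and so Corollary~\ref{C:completePick} could not be applied to it. It is exactly the weak-$*$-closedness of $C(\fA)$ inside $C(M(H))$ that makes the weak-$*$-continuous extension available. Everything else is formal, and no new use of the complete NP hypothesis is needed beyond what already entered Corollary~\ref{C:completePick}.
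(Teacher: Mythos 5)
Your argument is correct and is in substance the same as the paper's two-sentence proof: the paper says ``Property $A_1(1)$ is hereditary for dual subalgebras'' and that ``any weak-$*$ functional on $\fA$ is the restriction of a weak-$*$ functional on $M(H)$ with a small increase in norm,'' which is exactly the weak-$*$-closed-subspace/metric-quotient extension argument you spell out before invoking Corollary~\ref{C:completePick}. Your version simply makes explicit the Krein--\v{S}mulian check that $C(\fA)$ is weak-$*$-closed in $C(M(H))$ and the predual quotient identification that the paper leaves implicit.
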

\begin{proof}
  Property $A_1(1)$ is hereditary for dual subalgebras.  Moreover, any
  weak-$*$ functional on $\fA$ is the restriction of a weak-$*$
  functional on $M(H)$ with a small increase in norm.
\end{proof}

We can now apply Theorem~\ref{T:A1distance} to the above setting.

\begin{cor}\label{C:tangentialPick}
  Suppose $\fA$ is a dual algebra of multipliers on a complete NP
  space $H$.  Then
  \[ \L := \{k^L : L = \fA[h], \: h \in H \text{ cyclic for } M(H)
  \} \]%
  is a tangential Pick family for $\fA$.  Equivalently, the distance
  formula from $C(\fA)$ to $\fJ$
  \[ d(F, \fJ) = \sup \{\|(P_L M_F^*|_{M_L} \|: L \in \L \}\]%
  holds.
\end{cor}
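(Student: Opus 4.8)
The plan is to obtain this as a direct application of Theorem~\ref{T:A1distance}, using the structural input supplied by Section~\ref{S:ds}. Since $H$ carries a complete NP kernel, Corollary~\ref{C:completePick2} gives two things at once: first, the column space $C(\fA)$, viewed inside $B(H,H\otimes\ell^2)$, has property $A_1(1)$; second, every weak-$*$-continuous functional on $C(\fA)$ can be written $\phi(G)=\langle M_G h,K\rangle$ with $h$ cyclic for the \emph{full} multiplier algebra $M(H)$. The first fact is exactly the hypothesis of Theorem~\ref{T:A1distance}, so
\[ d(F,\fJ)=\sup_{L\in\CycLat(\fA)}\|P_L M_F^*|_{K_L}\| \]
holds for every choice of data $x_1,\dots,x_n,\ w_1,\dots,w_n,\ v_1,\dots,v_n$. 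The remaining work is to replace $K_L$ by $M_L$ and to shrink the index set from all of $\CycLat(\fA)$ down to $\L$, and it is the second, strengthened factorization that makes this possible.

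Concretely, I would fix the data, form $\fJ$, and choose (by duality) a contractive weak-$*$-continuous $\phi$ on $C(\fA)$ with $|\phi(F)|=d(F,\fJ)$. For $\epsilon>0$, Corollary~\ref{C:completePick2} produces $h\in H$ cyclic for $M(H)$ and $K\in H\otimes\ell^2$ with $\|h\|\,\|K\|<1+\epsilon$ and $\phi(G)=\langle M_G h,K\rangle$. Now one reruns the proof of Theorem~\ref{T:A1distance} with this factorization: set $L=\fA[h]$, replace $K$ by $(P_L\otimes I)K$, use $\phi(\fJ)=0$ to conclude $K\in K_L$, and read off $d(F,\fJ)\le\|P_L M_F^*|_{K_L}\|\,(1+\epsilon)$. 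The one new ingredient is that a vector $h$ cyclic for $M(H)$ cannot vanish at any $x_i$ with $k_{x_i}\neq 0$: from $M_f^*k_{x_i}=\overline{f(x_i)}\,k_{x_i}$ we get $\langle M_f h,k_{x_i}\rangle=f(x_i)h(x_i)$ for all $f\in M(H)$, so $h(x_i)=0$ would force $k_{x_i}\perp\overline{M(H)h}=H$, i.e. $k_{x_i}=0$. Hence Lemma~\ref{L:kernspan} gives $K_L=M_L$, and since $L=\fA[h]\in\L$ we obtain $d(F,\fJ)\le(1+\epsilon)\sup_{L\in\L}\|P_L M_F^*|_{M_L}\|$; letting $\epsilon\to 0$ yields one inequality. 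The reverse inequality $\sup_{L\in\L}\|P_L M_F^*|_{M_L}\|\le d(F,\fJ)$ follows from $M_L\subseteq K_L$ together with Lemma~\ref{L:easyestimate}. By Definition~\ref{D:tangfam}, this is precisely the statement that $\L$ is a tangential Pick family for $\fA$.

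There is no serious obstacle here: the corollary is essentially an assembly of Theorem~\ref{T:A1distance} and Corollary~\ref{C:completePick2}. The two points requiring attention are (i) that the factorization invoked must be the sharpened one from Corollary~\ref{C:completePick2} --- with $h$ cyclic for $M(H)$, not merely for $\fA$ --- since it is the $M(H)$-cyclicity that lets the index set be $\L$ rather than the larger family $\{\fA[h]:h(x_i)\neq 0\}$ of Theorem~\ref{T:A1distance}; and (ii) the degenerate case in which $k_{x_i}=0$ for some $i$, where the $i$-th interpolation condition is vacuous on columns of multipliers and can simply be discarded from the data before the argument is carried out.
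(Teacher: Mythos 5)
Your proposal is correct and matches the paper's (implicit) proof: the corollary is stated in the paper without a separate proof, merely as an application of Theorem~\ref{T:A1distance} via the factorization guaranteed by Corollary~\ref{C:completePick2}. You fill in the two small details the paper leaves unstated --- that a vector cyclic for $M(H)$ cannot vanish at any $x_i$ with $k_{x_i}\neq 0$ (so the hypothesis $h(x_i)\neq 0$ of Lemma~\ref{L:kernspan} is automatic), and that the degenerate case $k_{x_i}=0$ is harmless --- and then cite Lemma~\ref{L:easyestimate} for the reverse inequality, exactly as the paper's framework intends.
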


\section{The Toeplitz corona problem for subalgebras}
\label{S:TC}
We will now apply the tangential interpolation results of the previous
section to obtain a Toeplitz corona theorem.  Suppose $\fA $ is a dual
algebra of multipliers on a reproducing kernel Hilbert space $H$.
Given functions $f_1,\dots,f_n$ in $\fA$, the Toeplitz corona problem
asks that if there is a $\delta > 0$ such that
\begin{align} \label{E:positive}
\sum_{i=1}^nM_{f_i}M_{f_i}^* \geq \delta^2 I,
\end{align}
is it possible to find functions $g_1,...,g_n$ in $\fA$ such that \[
f_1g_1 + \dots + f_ng_n = 1? \] In other words, the row operator
$[M_{f_1}, \dots, M_{f_n} ]$ does have a right inverse in
$B(H,H^{(n)})$, by Douglas's factorization theorem. The question is
whether there exists a right inverse with entries in $\fA$.  One
typically requires some type of norm control on the $g_i$ as well. By
considering the case where $n=1$, the constant $\delta^{-1}$ is easily
seen to be the optimal operator norm for the column $[M_{g_1}, \dots,
M_{g_n}]^T$.  Using our notation, given a multiplier $F \in
M(H^{(n)},H)$ with $M_FM_F^* \geq \delta^2I$, is there a multiplier $G
\in M(H,H^{(n)})$ such that $FG = 1$ and $\| G \|_{M(H,H^{(n)})} \leq
\delta^{-1}$?

For the algebra $H^\infty$ acting on Hardy space, this question was
answered affirmatively by Arveson in \cite{Arveson} using his famous
distance formula for nest algebras, albeit without optimal norm
control.  Under the hypothesis that each $f_i$ was contractive, he
showed that the functions $g_i$ could be chosen such that
$\|g_i\|_\infty \leq 4n\delta^{-3}$.  Schubert (\cite{Schubert})
obtained the result with optimal constants using the commutant lifting
theorem of Sz. Nagy and Foias.  This program was carried out in
substantial generality by Ball, Trent and Vinnikov \cite{BTV} for
multiplier algebras of complete NP spaces.

If the Toeplitz corona problem has an affirmative solution for the
full multiplier algebra $M(H)$, then for $f_1,\dots,f_n \in \fA$, the
condition $\sum_{i=1}^nM_{f_i}M_{f_i}^* \geq \delta^2 I$ certainly
implies the existence of the solutions $g_1,\dots,g_n $ in $M(H)$.
However, in order to require that these functions belong to the
subalgebra $\fA$, a stronger set of assumptions on the $f_i$ is
generally required. The following result, appearing as Proposition 4.2
in \cite{RagWick} says that if $\L$ is a tangential Pick family for
$\fA$ and that $M_F^L(M_F^L)^* \geq \delta^2I$ for every $L \in \L$,
then there are solutions $g_1,\dots,g_n$ in $\fA$.  We include the
proof for completeness.
\begin{prop}
Suppose $\fA$ is a dual algebra of multipliers on a reproducing kernel Hilbert space $H$ and that $\L$ is a tangential Pick family for $\fA$.  If $F \in R(\fA)$ and $M_F^L(M_F^L)^* \geq \delta^2I$ for each $L \in \L$, then there is a multplier $G \in C(\fA)$ with $\|M_G\| \leq \delta^{-1}$ such that $FG = 1$.
\end{prop}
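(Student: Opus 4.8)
My plan is to reduce the statement to a directed family of \emph{finite} tangential interpolation problems, to solve each of these using that $\L$ is a tangential Pick family, and then to pass to a weak-$*$ limit. Fix $\delta>0$ as in the hypothesis, and for $x\in X$ let $v_x\in\ltwo$ be the symbol determined by $M_F^*k_x=k_x\otimes v_x$; explicitly, if $F=[f_1,f_2,\dots]$ then $v_x=(\ol{f_1(x)},\ol{f_2(x)},\dots)$, and the identity $(M_F^L)^*k_x^L=k_x^L\otimes v_x$ holds on every $L\in\Lat(\fA)$ because $(M_{f_j}^L)^*k_x^L=\ol{f_j(x)}\,k_x^L$. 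I claim it suffices to produce, for each finite $E=\{x_1,\dots,x_m\}\subseteq X$, a multiplier $G_E\in C(\fA)$ with $\norm{M_{G_E}}\le\delta^{-1}$ and $(FG_E)(x_i)=1$ for $i=1,\dots,m$. Granting this, the set $\{G\in C(\fA):\norm{M_G}\le\delta^{-1}\}$ is a bounded weak-$*$-closed subset of $B(H,H\otimes\ltwo)$, hence weak-$*$ compact, so the net $(G_E)$ indexed by the finite subsets of $X$ ordered by inclusion has a weak-$*$ cluster point $G$ in that set. For fixed $x$ the map $T\mapsto\inp{Tk_x}{k_x\otimes v_x}$ is a weak-$*$-continuous functional on $B(H,H\otimes\ltwo)$, and $\inp{M_{G_E}k_x}{k_x\otimes v_x}=(FG_E)(x)\norm{k_x}^2$, which equals $\norm{k_x}^2$ as soon as $x\in E$; hence $\inp{M_Gk_x}{k_x\otimes v_x}=\norm{k_x}^2$, i.e.\ $(FG)(x)=1$ whenever $k_x\ne0$. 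As this is precisely the meaning of $FG=1$, the limit $G$ is the sought solution.

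To handle a fixed finite $E=\{x_1,\dots,x_m\}$, I set $v_i:=v_{x_i}$ and $w_i:=\delta$ and consider the tangential interpolation problem: find $\wh G\in C(\fA)$ with $\norm{M_{\wh G}}\le1$ and $\wh G(x_i)^*v_i=\ol{w_i}$. Since $v_i=\ol{F(x_i)}$, the condition $\wh G(x_i)^*v_i=\ol{w_i}=\delta$ is equivalent to $(F\wh G)(x_i)=\delta$, so once such a $\wh G$ is found, $G_E:=\delta^{-1}\wh G$ has $\norm{M_{G_E}}\le\delta^{-1}$ and $(FG_E)(x_i)=1$, as required. The existence of at least one function satisfying the conditions $\wh G(x_i)^*v_i=\ol{w_i}$ (with no norm bound) is Proposition~\ref{P:arbitraryinterpolant}; and, $\L$ being a tangential Pick family, the distance formula of Definition~\ref{D:tangfam} together with the matrix criterion stated just before it reduce the existence of a norm-$\le1$ interpolant to the positivity
\[
\bigl[(\inp{v_j}{v_i}-w_i\ol{w_j})\,K^L(x_i,x_j)\bigr]_{i,j}
=\bigl[(\inp{v_j}{v_i}-\delta^2)\,K^L(x_i,x_j)\bigr]_{i,j}\ \ge\ 0
\qquad\text{for every }L\in\L .
\]

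I then verify these Pick matrices from the hypothesis $M_F^L(M_F^L)^*\ge\delta^2 I$. Fix $L\in\L$, take scalars $c_1,\dots,c_m$ and put $\eta=\sum_i c_ik_{x_i}^L\in L$. Using $(M_F^L)^*k_{x_i}^L=k_{x_i}^L\otimes v_{x_i}$,
\[
\inp{M_F^L(M_F^L)^*\eta}{\eta}=\norm{(M_F^L)^*\eta}^2=\sum_{i,j}c_i\ol{c_j}\,\inp{v_{x_i}}{v_{x_j}}\inp{k_{x_i}^L}{k_{x_j}^L},
\qquad
\norm{\eta}^2=\sum_{i,j}c_i\ol{c_j}\,\inp{k_{x_i}^L}{k_{x_j}^L},
\]
so $M_F^L(M_F^L)^*\ge\delta^2 I$ on $L$ is exactly the assertion that $[(\inp{v_{x_i}}{v_{x_j}}-\delta^2)\inp{k_{x_i}^L}{k_{x_j}^L}]_{i,j}\ge0$, which is the matrix above after identifying $\inp{k_{x_j}^L}{k_{x_i}^L}$ with $K^L(x_i,x_j)$ (positivity is preserved under transposition and entrywise conjugation). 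This completes the finite problem, and with it the proposition.

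I expect the main obstacle to be this last translation: one must be careful about which compression and adjoint are in play, about the fact that the symbol $v_x$ is the same for all $L$, and about the placement of complex conjugates, so that the operator inequality $M_F^L(M_F^L)^*\ge\delta^2 I$ lines up with the scalar Pick condition of Definition~\ref{D:tangfam}. A secondary point to check is that Proposition~\ref{P:arbitraryinterpolant} genuinely applies, i.e.\ that $\L$ contains a subspace $\fA[h]$ with $h$ non-vanishing on $E$; this holds for the tangential Pick families built in Section~\ref{S:ds}, whose members are generated by vectors cyclic for $M(H)$, and such a vector cannot vanish at a point $x$ with $k_x\ne0$.
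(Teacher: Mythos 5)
Your proof follows the same strategy as the paper: reduce to finite subsets of $X$, set $v_i = F(x_i)^*$ and $w_i=\delta$ so that the hypothesis $M_F^L(M_F^L)^*\ge\delta^2 I$ translates into positivity of the Pick matrices for each $L\in\L$, invoke the tangential Pick family to produce a contractive interpolant $G_E$ for each finite $E$, and pass to a weak-$*$ cluster point before rescaling by $\delta^{-1}$. The argument is correct; you simply spell out more explicitly the Pick-matrix computation, the role of Proposition~\ref{P:arbitraryinterpolant} in supplying an initial (unnormed) interpolant, and the weak-$*$ compactness step that the paper compresses into ``a standard weak$^*$ approximation argument.''
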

\begin{proof}
  For any finite set of points $E = \{ x_1,\dots,x_k\}
  \subset X$, the positivity condition implies that
  \[ \left [ (\langle F(x_j)^*, F(x_i)^* \rangle -
    \delta^2 )k^L(x_i,x_j \right ] \geq 0 \]%
  for every $L \in \L$.  For $1 \leq i \leq k$, set $v_i = F(x_i)^*$
  and $w_i = \delta$ so that the above matrix is of the form in
  Theorem~\ref{T:main}.  Since $\L$ is a tangential Pick family, for
  each $E$ there is a corresponding contractive $G_E \in C(\fA)$ such
  that $\delta = \langle F(x_i), G_E(x_i)^* \rangle$ for $i =1 ,
  \dots, k$.  Since $\delta > 0$, we have
  $F(x_i)G_E(x_i) = \delta$.  Using a standard weak$^*$ approximation
  argument, there is a contractive multiplier $G \in R(\fA)$ such that
  $G|_E = G_E$ for every finite $E \subset X$.  It follows that $FG =
  \delta$ and that $\delta^{-1}G$ is the solution of required
  multiplier norm.
\end{proof}

In other words, in order to solve the
Toeplitz corona problem for an arbitrary subalgebra $\fA$, 
one requires that the row multiplier $M_F^L$
has a right inverse in $B(L,L^{(n)})$ for every $L \in \L$.
We can now use Corollary~\ref{C:tangentialPick} to solve the Toeplitz corona problem for subalgebras of $M(H)$, where $H$ is a complete NP space.  

\begin{thm} \label{T:completetopl}
Suppose $H$ is a RKHS a complete NP kernel and $\fA \subset M(H)$ is a dual algebra of multipliers on $H$.  If $F \in R(\fA)$ and $\delta > 0$ such that
\begin{align} \label{E:positive2}
M^L_F(M_F^L)^* \geq \delta^2I_L 
\end{align}
for every $L = \fA[h]$ where $h$ is a cyclic vector for $M(H)$, then there is a $G \in C(A)$ such that $FG =1$ and $\| M_G \| \leq \delta^{-1}$.
\end{thm}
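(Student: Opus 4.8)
The plan is to deduce this theorem directly from the preceding Proposition together with Corollary~\ref{C:tangentialPick}. First I would recall from Corollary~\ref{C:tangentialPick} that, since $H$ is a complete NP space and $\fA$ is a dual algebra of multipliers on $H$, the family
\[
\L := \{ k^L : L = \fA[h], \ h \in H \text{ cyclic for } M(H) \}
\]
is a tangential Pick family for $\fA$. This is precisely the conclusion we need in order to apply the Proposition. The hypothesis \eqref{E:positive2} of the present theorem is exactly the hypothesis $M_F^L (M_F^L)^* \geq \delta^2 I_L$ required in the Proposition, but only for those $L$ appearing in $\L$ --- and those are the only subspaces the Proposition uses.

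So the second step is to verify that the matching of hypotheses is genuine: the $L$'s quantified over in \eqref{E:positive2} are exactly the $L = \fA[h]$ with $h$ cyclic for $M(H)$, which is exactly the index set of $\L$. Since $F \in R(\fA)$ by assumption, all the hypotheses of the Proposition are met. Applying it yields a multiplier $G \in C(\fA)$ with $\|M_G\| \leq \delta^{-1}$ such that $FG = 1$, which is the assertion of the theorem.

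The content of this theorem is therefore entirely carried by Corollary~\ref{C:tangentialPick}, which in turn rests on Theorem~\ref{T:A1distance} (the $A_1(1)$ distance formula) and Corollary~\ref{C:completePick2} (that $C(\fA)$ has property $A_1(1)$ and the requisite predual factorization through a vector cyclic for $M(H)$). The only point that deserves a word of care --- and the one I would regard as the main obstacle --- is that the Proposition requires the \emph{tangential Pick family} property, i.e.\ the \emph{exact} distance formula over $\L$, not merely the lower bound of Lemma~\ref{L:easyestimate}; this is why it is essential that $h$ be cyclic for the \emph{full} multiplier algebra $M(H)$ rather than merely for $\fA$, since it is precisely this stronger cyclicity that makes the predual factorization of Corollary~\ref{C:completePick2} available and hence forces equality in the distance formula. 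Once that is in hand, the argument is a direct citation.
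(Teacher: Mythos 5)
Your proof is correct and matches the paper's intended argument exactly: the theorem is a direct consequence of the preceding Proposition together with Corollary~\ref{C:tangentialPick}, and the paper states it without a separate proof for precisely this reason. Your remark about why cyclicity for the full $M(H)$ (rather than merely for $\fA$) is essential is an accurate gloss on why Corollary~\ref{C:completePick2} is what makes the exact distance formula, and hence the Proposition, available.
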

Note that Theorem ~\ref{T:completetopl} works just as well for infinitely many $f_i$.
When $\fA = M(H)$ in Theorem~\ref{T:completetopl}, then $\fA[h]= H$ for any cyclic vector $h$.  Consequently, the scalar-valued version of the Ball-Trent-Vinnikov result \cite{BTV} is recovered as a special case.

\begin{eg}
We will demonstrate how the hypothesis in Theorem~\ref{T:completetopl} can be replaced by a slightly simpler one for `large' subalgebras of $M(H)$.  Suppose $H$ is a RKHS with irreducible and complete NP kernel and that $\I$ is a weak-$*$-closed ideal in $M(H)$ of \emph{finite codimension} $k$.  Form the subalgebra $\fA := \bC + \I = \{ c + g: c \in \bC, g \in \I \}$.

If $h$ is a cyclic vector for $M(H)$, let $M := \I[h] = \ol{\I H}$  We have 
\[\fA[h] = \ol{\spn (h, \I[h])} = \ol{\spn (h, M)} = P_M^\perp h \oplus M. \]
Since $\I$ is of finite codimension, so is $M$.  Find an orthogonal basis $\{e_1, \dots, e_p \}$ for $M^\perp$ where $p \leq k$.  For each cyclic vector $h$, there are scalars $a_1, \dots, a_p \in \bC$ such that $h = a_1e_1 + \dots + a_pe_p$.  Rescaling if necessary, we may assume that $|a_1|^2 + \dots + |a_p|^2 = 1$, i.e. $a := (a_1, \dots a_p) \in \partial \bB_p$. For $a \in \partial \bB_p$, let $L_a = \bC(a_1e_1 + \dots a_pe_p) \oplus M$ denote the invariant subspace associated to $a$ and set $M_F^a := M_f^{L_a}$.  The hypothesis in Theorem~\ref{T:completetopl} may be replaced by 
\[ M_F^a(M_F^a)^* \geq \delta^2I_{L_a}, \: a \in \partial \bB_p. \]
For instance, if $H = H^2_2$ and $\I = \langle z_1z_2 \rangle$ is the ideal generated by the monomial $z_1z_2$, one is required to check positivity with respect to the invariant subspaces
\[L_{a,b,c} = \bC ( a + bz_1 + cz_2) \oplus \spn (z_1^kz_2^l: k,l \geq 1) \text{ for } (a,b,c) \in \partial \bB_3 .\] 
\end{eg}
 
 \begin{rem}
It is natural to ask if one is required to check the inequality (\ref{E:positive2}) for \emph{every} such $L$.  In \cite{DPRS09}, analysis of this type was carried out for the NP problem on the algebra $\bC + z^2H^\infty$ acting on $H^2$. The answer is surprising: one is required to check essentially all invariant subspaces.  Consequently, all subspaces are required for the distance formula in Theorem~\ref{T:main}.
On the other hand, it may be possible to retrieve Theorem~\ref{T:completetopl} without the use of tangential interpolation.  It would be very interesting to have an alternative proof that requires fewer subspaces.
\end{rem}

%%%%%%%%%%%%%%%%%%%%%%%%%%%%%%

\end{document}